\documentclass{article}
\usepackage[dvipdfmx]{graphicx}
\usepackage{amsmath}
\usepackage{amsthm}
\usepackage{amsfonts}
\usepackage{amssymb}
\usepackage{latexsym}
\usepackage{authblk}
\def\qed{\hfill $\Box$}
\makeatletter
\renewenvironment{proof}[1][\proofname]{\par
  \normalfont
  \topsep6\p@\@plus6\p@ \trivlist
  \item[\hskip\labelsep{\bfseries #1}\@addpunct{\bfseries.}]\ignorespaces
}{
  \endtrivlist
}
\renewcommand{\proofname}{Proof}
\makeatother
\begin{document}
\title{The $l_q$ consistency of the Dantzig Selector for Cox's Proportional Hazards Model}
\author{Kou Fujimori}
\author{Yoichi Nishiyama}
\affil{Waseda University}
\date{}
\maketitle
\theoremstyle{plain}
\newtheorem{defi}{Definition}[section]
\newtheorem{lem}[defi]{Lemma}
\newtheorem{thm}[defi]{Theorem}
\newtheorem{rem}[defi]{Remark}
\newtheorem{fact}[defi]{Fact}
\newtheorem{ex}[defi]{Example}
\newtheorem{prop}[defi]{Proposition}
\newtheorem{cor}[defi]{Corollary}
\newtheorem{condition}[defi]{Condition}
\newtheorem{assumption}[defi]{Assumption}
\newcommand{\sign}{\mathop{\rm sign}}
\newcommand{\conv}{\mathop{\rm conv}}
\newcommand{\argmax}{\mathop{\rm arg~max}\limits}
\newcommand{\argmin}{\mathop{\rm arg~min}\limits}
\newcommand{\argsup}{\mathop{\rm arg~sup}\limits}
\newcommand{\arginf}{\mathop{\rm arg~inf}\limits}
\begin{abstract}
The Dantzig selector for the 
proportional hazards model proposed by D.R. Cox is studied in a high-dimensional and sparse setting. 
We prove the $l_q$ consistency for all $q \geq 1$ of some estimators based on the compatibility factor, 
the weak cone invertibility factor, 
and the restricted eigenvalue for certain deterministic matrix which approximates 
the Hessian matrix of log partial likelihood. 
Our matrix conditions for these three factors are weaker than those of previous researches.
\end{abstract}
\section{Introduction}
The proportional hazards model proposed by Cox (1972) is widely used in survival analysis. 
Using the counting process approach, Andersen and Gill (1982) proved that the maximum partial likelihood estimator 
satisfies the consistency and the asymptotic normality when the number of covariates $p$ is fixed. 
In this paper, we deal with a high-dimensional and sparse setting, $i.e.$, the case where $p$ is much larger than $n$ and 
the number $S$ of non-zero components of the true parameter is relatively small. 
In this setting, the maximum partial likelihood estimator may not behave well, so we need to consider
other estimation procedures. 
For example, the penalized methods such as LASSO (Tibshirani (1997), Huang {\it et al}.\ (2013), Bradic {\it et al}.\ (2013), among others) and SCAD (Fan and Li (2002) and Bradic {\it et al}.\ (2011)) are studied by many researchers. 
Instead of the penalized methods, in this paper we apply a relatively new method called the Dantzig selector for the proportional hazards model. 

The Dantzig selector (DS) was proposed by Cand\'{e}s and Tao (2007) in a high-dimensional and sparse setting for the
linear regression model:
\[Y = Z \beta + \epsilon.\]
The Dantzig selector returns the estimator $\hat{\beta}$ for $\beta$ defined as follows:
\[\hat{\beta} :=\argmin \limits_{\beta \in \mathcal{C}} \|\beta\|_1,\quad \mathcal{C} :=\{\beta \in \mathbb{R}^p : |Z^j{^T}(Y-Z\beta)| \leq \lambda\},\]
where $\|\cdot\|_1$ is $l_1$-norm, $Z^j$ denotes the $j$-th column of the design matrix $Z$ and $\lambda \geq0$ 
is a suitable constant.   
Assume that $\epsilon$ is a Gaussian error term for simplicity. 
When $\lambda = 0$, the DS returns to the MLE. For $\lambda >0$, the DS searches for the sparsest $\beta$ 
within the given distance of the MLE. 
Notice that this method has a good potential to be applied for other models. 
Antoniadis {\it et al}.\ (2010) proposed the Survival Dantzig selector (SDS), which is an application of the DS 
for the proportional hazards model, and studied the $l_2$ consistency of the SDS under the UUP condition 
which is the matrix condition used by Cand\'es and Tao (2007). The purpose of this paper is to prove the 
$l_q$ consistency for $q \geq 1$ under some weaker matrix conditions. 
Our way of proofs are similar to those of Huang {\it et al}.\ (2013) for LASSO in the proportional hazards model. 
They introduced the compatibility factor, the weak invertibility factor, and the restricted eigenvalue for 
Hessian matrix of log partial likelihood. Since the Hessian is a random matrix, these factors are 
random variables. They also derived some conditions to treat them as deterministic constants. 
In contrast, we will define these factors for a deterministic matrix, which approximates the 
Hessian matrix to clarify the proofs. 

This paper is organized as follows. The settings for
the proportional hazards model and an estimation procedure are given in Section $2$. 
In Section $3$, we introduce some matrix conditions to derive the consistency.
Main results are presented in Section $4$. 

Throughout this paper, for every $q \in [1,\infty]$, we denote by $\|\cdot\|_q$ the $l_q$-norm of $\mathbb{R}^p$-vector, which is defined as follows:
\begin{align*}
\|v\|_q &= \left(\sum \limits_{j=1}^p |v_j|^q \right)^{\frac{1}{q}},\quad q < \infty ; \\
\|v\|_{\infty} &= \sup \limits_{1 \leq j \leq p} |v_j|.
\end{align*}
Moreover, for a given $m \times n$ matrix $A$, where $m,\ n \in \mathbb{N}$, we define $\|A\|_{\infty}$ by
\[\|A\|_{\infty} := \sup \limits_{1 \leq i \leq m} \sup \limits_{1 \leq j \leq n} |A_i^j|,\]
where $A_i^j$ denotes the $(i,j)$-component of the matrix $A$.
\section{Model set up and estimation procedure}
\subsection{Model set up}
Let $T$ be a survival time, and $C$ a censoring time, which are positive real-valued random variables on a probability space 
$(\Omega,\mathcal{F},P)$.  
The survival time $T$ is assumed to be conditionally independent of a censoring time $C$ given the  multidimensional covariate $Z$.
For every $n \in \mathbb{N}$, we observe the data $\{(X_i,D_i,Z_i)\}_{i=1}^n$, which are $i.i.d.$ copies of $(X,D,Z)$, where $D = 1_{\{T \leq C\}}$. 
Assume that the dimension of covariates $p = p_n$ depends on $n$. In particular, we assume that $p_n \gg n$.
Let $N_i (t) = 1_{\{X_i \leq t,\ D_i =1\}}$ be counting processes based on these data, and 
$Y_i (t) = 1_{\{X_i \geq t\}}$ at risk processes for every $i= 1,2,\ldots, n$. 
Assume that the sample paths $t \leadsto N_i(t),\ i=1,2,\ldots,n$, have no simultaneous jumps.  
We introduce the filtration $\{\mathcal{F}_t\}_{t \geq 0}$ defined by
\begin{align*}
\mathcal{F}_t  &:= \sigma \{ N_i(u),\ Y_i(u),\ 0 \leq u \leq t,\ Z_i,\ i = 1,2,\ldots,n \}.
\end{align*}
Assume that $N_i$ has the intensity $\lambda_i$ of the form 
\[\lambda_i (t,Z_i) = Y_i (t) \alpha_0 (t) \exp(Z_i ^T \beta_0),\]
where $\alpha_0$ is a baseline hazard function which is regarded as a nuisance parameter, and 
$\beta_0 \in \mathbb{R}^{p_n}$ is the unknown true parameter. We are interested in the estimation problem of 
$\beta_0$ in a high dimensional and sparse setting, $i.e.$, in the case where $\beta_0 \in \mathbb{R}^{p_n}$ has $S$ non-zero components.
Using the Doob-Meyer decomposition theorem, we have that the processes
\[M_i (t) := N_i (t) - \int_0^t \lambda_i (u,Z_i)du,\ i = 1,2,\ldots,n,\]
are independent $\{\mathcal{F}_t\}_{t \geq 0}$ square-integrable martingales. Their predictable quadratic variations are given by
\[\langle M_i,M_i \rangle(t) = \int_0^t \lambda_i (u,Z_i) du.\]
We fix the time interval $[0,\tau]$. Following Cox (1972), we introduce the log partial likelihood defined by
\[C_n(\beta) := \sum \limits_{i=1}^{n} \int_{0}^{\tau} \{Z_i ^T \beta -\log (S_n^0 (\beta,u))\} dN_i (u),\]
where $S_n^0 (\beta,u) := \sum \limits_{i=1}^n Y_i (u)\exp (Z_i ^T \beta)$.\ 
Furthermore, we define the $p_n$ dimensional vector $U_n(\beta)$, and the $p_n \times p_n$ matrix $J_n(\beta)$ as
\begin{align*}
U_n(\beta) &:=\dot{l}_n(\beta) = \frac{1}{n} \sum \limits_{i =1}^{n} \int_{0}^{\tau} \left(Z_i - \frac{S_n^1}{S_n^0}(\beta,u)\right)dN_i(u),\\
J_n(\beta) &:=-\ddot{l}_n(\beta) =\frac{1}{n} \int_{0}^{\tau} \left[\frac{S_n^2}{S_n^0} (\beta,u) - \left(\frac{S_n^1}{S_n^0}\right)^{\otimes2} (\beta,u)\right]d\bar{N}(u),
\end{align*}
where $\bar{N} (u):=\sum \limits_{i=1}^n N_i (u),\ l_n(\beta):=C_n(\beta)/n,\ S_n^1 (\beta,u):=\sum \limits_{i=1}^n Y_i (u)\exp (Z_i ^T \beta)Z_i$,
 and $S_n ^2 (\beta,u):=\sum \limits_{i=1}^n Y_i (u)\exp (Z_i ^T \beta)Z_i ^{\otimes 2}$. Note that 
$J_n(\beta_0)$ is a nonnegative definite matrix.
In particular, for the true parameter $\beta = \beta_0$ and for all $j = 1,2,\ldots,p_n$, 
we can see that $U_n^j(\beta_0),\ j = 1,2,\ldots,p_n$ are the terminal values of 
martingales $\{U_n^j (\beta_0,t)\}_{t \in [0,\tau]}$ given by:
\[U_n^j (\beta_0,t) = \frac{1}{n} \sum \limits_{i=1}^{n} \int_{0}^{t}\left( Z_i ^j - \frac{S_{n}^{1j}}{S_n^0} (\beta_0,u) \right)dM_i(u).\] 
Hereafter, we assume the following conditions.
\begin{assumption}
\begin{description}
\item[$(i)$]
There exists a positive constant $K_1$ such that
\[\sup \limits_{i,j} |Z^j_i| < K_1.\]
\item[$(ii)$]
The baseline hazard function $\alpha_0$ is integrable on $[0,\tau],\ i.e.$, 
\[\int_0^\tau \alpha_0(t) dt < \infty.\]
\item[$(iii)$]
There exist $\mathbb{R}$-valued function $s_n^0(\beta,t)$, $\mathbb{R}^{p_n}$-valued function $s_n^1(\beta,t)$, and $p_n \times p_n$ matrix-valued function $s_n^2(\beta,t)$ such that for $l = 0,1,2$, 
\[\sup \limits_{\beta \in \mathbb{R}^{p_n}} \sup \limits_{t \in [0,\tau]} \left\|\frac{1}{n} S_n^l (\beta,t) - s_n^l (\beta,t) \right\|_{\infty} \rightarrow^p 0.\]
\item[$(iv)$]
For $l = 0,1,2$, the functions $(\beta,t) \mapsto s_n^l(\beta,t)$ are uniformly continuous. Moreover, they satisfy the following conditions:
\[\limsup_{n \rightarrow \infty}\sup \limits_{\beta \in \mathbb{R}^{p_n}} \sup \limits_{t \in [0,\tau]} \|s_n^l(\beta,t)\|_{\infty} < \infty,\]
\[\liminf_{n \rightarrow \infty}\inf \limits_{\beta \in \mathbb{R}^{p_n}} \inf \limits_{t \in [0,\tau]} s_n^0 (\beta,t) >0.\]
\item[$(v)$]
Assume that $S$ is a constant not depending on $n$. Define the $p_n \times p_n$ matrix 
$I_n(\beta)$ of rank $S$ by
\[I_n(\beta) := \int_{0}^{\tau} \left[\frac{s_n^2}{s_n^0}(\beta,u) - \left(\frac{s_n^1}{s_n^0}\right)^{\otimes2}(\beta,u)\right]
s_n^0(\beta_0,u)\alpha_0 (u) du.\]
When we compute $s_n^0,\ s_n^1,\ s_n^2$ only with respect to the components of the true $S$ dimensional vector $\beta_0$, 
we have the $S \times S$ sub-matrix $\mathcal{I}_n(\beta_0)$ of $I_n(\beta_0)$. 
We assume that $\mathcal{I}_n(\beta_0)$ is positive definite, and $I_n(\beta)$ for all $\beta \in \mathbb{R}^{p_n}$ are nonnegative definite matrices.
\end{description}
\end{assumption}
\subsection{Estimation procedure}
We define the estimator $\hat{\beta}_n$ of $\beta_0$ as
\begin{equation}
\hat{\beta}_n := \argmin \limits_{\beta \in \mathcal{B}_n} \|\beta\|_1 ,
\label{eq1}
\end{equation} 
where $\mathcal{B}_n := \{\beta \in \mathbb{R}^{p_n}:\|U_n(\beta)\|_\infty \leq \gamma \}$, 
and $\gamma \geq 0$ is a suitable constant.
We call the estimator $\hat{\beta}_n$ the Dantzig Selector for Proportional Hazards model (DSfPH). 
Note that this estimator is called the Survival Dantzig Selector (SDS) by Antoniadis {\it et al}.\ (2010).

\section{Matrix conditions}
In this section, we will discuss some matrix conditions to derive the theoretical results for DSfPH $\hat{\beta}_n$. 
Hereafter, we write $T_0$ for the support of $\beta_0$, $i.e.$, 
\[T_0 := \{j : \beta_{0j} \not= 0\} .\]
To begin with, we introduce the following three factors $(A),\ (B)$ and $(C)$, all of which are used by 
Huang {\it et al}.\ (2013) 
for LASSO in Cox's proportional hazards model.
\begin{defi}
For every index set $T \subset \{1,\ 2,\ \cdots,\ p_n\}$ and $h \in \mathbb{R}^{p_n}$, 
$h_T$ is a $\mathbb{R}^{|T|}$ dimensional sub-vector of $h$ constructed by extracting the components of $h$ corresponding to the indices in $T$. Define the set $C_T$ by 
\[C_T := \{h \in \mathbb{R}^{p_n} : \|h_{T^c}\|_1 \leq \|h_{T}\|_1\} .\]
We introduce the following three factors.
\begin{description}
\item[$(A)$ Compatibility factor]
\[\kappa(T_0;I_n(\beta_0)) := \inf \limits_{0 \not = h \in C_{T_0}} \frac{S^{\frac{1}{2}} (h^T I_n(\beta_0) h)^{\frac{1}{2}}}{\|h_{T_0}\|_1}.\]
\item[$(B)$ Weak cone invertibility factor]
\[F_q(T_0;I_n(\beta_0)) :=  \inf \limits_{0 \not = h \in C_{T_0}} \frac{S^{\frac{1}{q}} (h^T I_n(\beta_0) h)^{\frac{1}{2}}}{\|h_{T_0}\|_1 \|h\|_q},\quad q \geq 1.\]
\item[$(C)$ Restricted eigenvalue]
\[RE(T_0;I_n(\beta_0)) :=  \inf \limits_{0 \not = h \in C_{T_0}} \frac{ (h^T I_n(\beta_0) h)^{\frac{1}{2}}}{\|h\|_2}.\]
\end{description}
\end{defi}
As mentioned in the Introduction, Huang {\it et al}. (2013) defined these factors for the random matrix $J_n(\beta_0)$, and derived some conditions to treat them as deterministic constants. On the other hand, we define them not for   $J_n(\beta_0)$, but for the deterministic matrix $I_n(\beta_0)$, since we will prove that 
$\|I_n(\beta_0)-J_n(\beta_0)\|_{\infty} = o_p(1)$ in Section $4$ of this paper. 

There exist other matrix conditions for DSfPH such as the UUP condition, which is used in Cand\'es and Tao (2007) 
and Antoniadis {\it et al}.\ (2010). 
To discuss the relationship between the UUP condition and our conditions, let us introduce some objects. 

Note that there exists a matrix $A$ such that  $A^T A = I_n(\beta)$, because $I_n(\beta)$ is a nonnegative definite matrix. 
Given a index set $T \subset \{1,2,\ldots,p_n\}$, 
we write $A_T$ for the $p_n \times |T|$ matrix 
constructed by extracting the columns of $A$ corresponding to the indices in $T$. 
The restricted isometry constant $\delta_N(I_n(\beta_0))$ is the smallest quantity such that
\[(1- \delta_{N}(I_n(\beta_0)))\|h\|_2 ^2 \leq \|A_T h\|_2 ^2 \leq (1+ \delta_N(I_n(\beta_0)))\|h\|_2 ^2,\]
for all $T \subset \{1,2,\ldots,p_n\}$ with $|T| \leq N$, where $N \leq p_n$ is an integer, and all $h \in \mathbb{R}^{|T|}$. 
The restricted orthogonality constant $\theta_{S,S'}(I_n(\beta_0))$ is the smallest quantity such that 
\[|(A_T h)^T A_{T'} h'| \leq \theta_{S,S'}(I_n(\beta_0)) \|h\|_2 \|h'\|_2\]
for all disjoint sets $T,\ T' \subset \{1,2,\ldots, p_n\}$ with $|T| \leq S,\ |T'| \leq S'$, where $S+S' \leq p_n$ 
and all vectors $h \in \mathbb{R}^{|T|}$ and $h' \in \mathbb{R}^{|T'|}$.
For $\delta_{2S}(I_n(\beta_0))$ and $\theta_{S,2S}(I_n(\beta_0))$, the UUP condition is described that 
$0 < 1-\delta_{2S}(I_n(\beta_0))-\theta_{S,2S}(I_n(\beta_0))$. 

In addition, we introduce another factor $\phi_{2S}(T_0;I_n(\beta_0))$ by 
\[\phi_{2S}(T_0;I_n(\beta_0)) := \inf \limits_{T \supset T_0,\ |T| \leq 2S,\ h \in D_{T_0,T} } 
\frac{(h^T I_n(\beta_0)h)^{\frac{1}{2}}}{\|h_T\|_2},\]
where 
\[D_{T_0,T} := \left\{h \in C_{T_0} : \|h_{T^c}\|_{\infty} \leq \min \limits_{j \in T \setminus T_0} |h_j| \right\},
\quad T \supset T_0.\]
Define that $ \min_{j \in T \setminus T_0} |h_j|= \infty$ when $T = T_0$.
The next lemma provides the asymptotic relationship between the UUP condition and a condition for 
$\phi_{2S}(T_0;I_n(\beta_0))$. 
The proof is an adaptation of that in van de Geer and B\"{u}hlmann (2007), so it is omitted.
\begin{lem}
If $\liminf_{n \rightarrow \infty} \{1 - \delta_{2S}(I_n(\beta_0)) - \theta_{S,2S}(I_n(\beta_0))\} > 0$, 
then it holds that $\liminf_{n \rightarrow \infty} \phi_{2S}(T_0;I_n(\beta_0)) > 0$.
\end{lem}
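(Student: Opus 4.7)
The plan is to factor $I_n(\beta_0) = A^T A$ so that $(h^T I_n(\beta_0) h)^{1/2} = \|A h\|_2$, then produce a uniform lower bound of the form
\[
\|A h\|_2 \;\geq\; \frac{1 - \delta_{2S}(I_n(\beta_0)) - \theta_{S, 2S}(I_n(\beta_0))}{\sqrt{1 + \delta_{2S}(I_n(\beta_0))}}\, \|h_T\|_2
\]
over the admissible pairs $(T,h)$ in the infimum defining $\phi_{2S}(T_0; I_n(\beta_0))$. The conclusion then follows by taking $\liminf_{n \to \infty}$ and observing that the UUP hypothesis eventually keeps $1 - \delta_{2S} - \theta_{S,2S}$ bounded below by a positive constant and $\delta_{2S}$ bounded away from $1$.

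First I would reduce to the case $|T| = 2S$. Given $h \in D_{T_0, T}$ with $|T| = S + k$ and $k < S$, I enlarge $T$ to $T' := T \cup T_1'$, where $T_1'$ carries the indices of the $S - k$ largest entries of $|h_{T^c}|$. Checking the two defining constraints of $D_{T_0, T'}$ shows $h \in D_{T_0, T'}$, and since $T' \supset T$ we have $\|h_{T'}\|_2 \geq \|h_T\|_2$, so $\|A h\|_2 / \|h_T\|_2 \geq \|A h\|_2 / \|h_{T'}\|_2$. Hence it suffices to control the infimum for $|T| = 2S$, and in this regime the constraint $\|h_{T^c}\|_\infty \leq \min_{j \in T \setminus T_0} |h_j|$ forces $T \setminus T_0$ to be exactly the $S$ indices corresponding to the largest entries of $|h_{T_0^c}|$; that is, $T$ coincides with the set $T_{01}$ in the notation of Cand\'{e}s and Tao (2007).

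With this reduction in hand, decompose $T_0^c$ into consecutive blocks $T_1, T_2, \ldots$ of size $S$ by decreasing $|h|$ (so $T_1 = T \setminus T_0$), and write $A h = A_T h_T + \sum_{j \geq 2} A_{T_j} h_{T_j}$. Taking the inner product with $A_T h_T$, invoking RIP (valid for $|T| = 2S$) and the restricted orthogonality constant (valid for $|T_j| = S$ and $|T| = 2S$),
\[
\langle A h,\, A_T h_T \rangle \;\geq\; (1 - \delta_{2S})\|h_T\|_2^2 \;-\; \theta_{S,2S}\, \|h_T\|_2 \sum_{j \geq 2} \|h_{T_j}\|_2.
\]
The standard block estimate $\|h_{T_j}\|_2 \leq \|h_{T_{j-1}}\|_1 / \sqrt{S}$ for $j \geq 2$ summed telescopically gives $\sum_{j \geq 2} \|h_{T_j}\|_2 \leq \|h_{T_0^c}\|_1/\sqrt{S}$; combined with $\|h_{T_0^c}\|_1 \leq \|h_{T_0}\|_1$ from $h \in C_{T_0}$ and the Cauchy--Schwarz bound $\|h_{T_0}\|_1 \leq \sqrt{S}\|h_{T_0}\|_2 \leq \sqrt{S}\|h_T\|_2$, this yields $\sum_{j \geq 2} \|h_{T_j}\|_2 \leq \|h_T\|_2$. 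Estimating the inner product above via Cauchy--Schwarz and RIP, $|\langle A h, A_T h_T\rangle| \leq \sqrt{1 + \delta_{2S}}\,\|A h\|_2 \|h_T\|_2$, and rearranging deliver the displayed bound.

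The conceptual hurdle is the opening reduction: one has to recognise that the seemingly restrictive class $D_{T_0, T}$ with $|T| = 2S$ is precisely the canonical domain of the Cand\'{e}s--Tao block decomposition, so that the subsequent inequalities become a direct application of RIP and restricted orthogonality. Once that identification is made everything else is a routine transcription, which is presumably why the paper defers the detailed verification to van de Geer and B\"{u}hlmann (2007).
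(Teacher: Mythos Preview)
Your argument is correct and is precisely the adaptation of the Cand\'{e}s--Tao block decomposition that the paper defers to by citing van de Geer and B\"{u}hlmann; since the paper omits its own proof entirely, there is nothing further to compare, and your reduction to $|T|=2S$ followed by the RIP\,/\,restricted-orthogonality inner-product estimate is exactly the intended route. The only cosmetic point is that the last block $T_j$ may have fewer than $S$ indices, but the bounds $\|h_{T_j}\|_2 \le \|h_{T_{j-1}}\|_1/\sqrt{S}$ and $|\langle A_{T_j} h_{T_j}, A_T h_T\rangle| \le \theta_{S,2S}\|h_{T_j}\|_2\|h_T\|_2$ hold verbatim in that case, so nothing changes.
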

Noting that $\|h_{T_0}\|_1^2 \leq S \|h_{T_0}\|_2^2$, 
we have that 
\[\kappa(T_0;I_n(\beta_0)) \geq \phi_{2S}(T_0;I_n(\beta_0)),\]
which implies that
\[\liminf \limits_{n \rightarrow \infty} \kappa(T_0;I_n(\beta_0)) \geq \liminf_{n \rightarrow \infty}\phi_{2S}(T_0;I_n(\beta_0)) >0.\]
Noting also that $\|h_{T_0}\|_1^q \geq \|h_{T_0}\|_q^q$ for all $q \geq 1$, 
we can see that 
$\kappa(T_0;I_n(\beta_0))  \leq 2\sqrt{S} RE(T_0;I_n(\beta_0))$, and 
$\kappa(T_0;I_n(\beta_0)) \leq F_q(T_0;I_n(\beta_0))$. 
We thus have that the factors $(A),\ (B)$ and $(C)$ are strictly positive when $n$ is large 
if $\liminf_{n \rightarrow \infty} \{1 - \delta_{2S}(I_n(\beta_0)) - \theta_{S,2S}(I_n(\beta_0))\} > 0$.
So we will assume in our main theorems that these three factors are 
``asymptotically positive'', in the sense that
\[\liminf \limits_{n \rightarrow \infty} \kappa(T_0;I_n(\beta_0)) >0,\]
\[\liminf \limits_{n \rightarrow \infty} F_q(T_0;I_n(\beta_0)) >0\]
or
\[\liminf \limits_{n \rightarrow \infty} RE(T_0;I_n(\beta_0)) >0,\]
to prove the consistency of DSfPH. 

\section{Main result}
In this section,\ we will prove the consistency of DSfPH $\hat{\beta}_n$. To do this, we will prepare three lemmas. 
Lemma $4.1$ below states that the true parameter $\beta_0$ is an element of $\mathcal{B}_n$ appearing in (\ref{eq1}) with large probability 
when the sample size $n$ is large.
\begin{lem}
Put $\gamma = \gamma_{n,p_n} = K_2 \log (1+p_n) / n^\alpha$, where  
$0 < \alpha \leq 1/2$ and $K_2 >0$ are constants.
If $p_n = O(n^\xi)$ for some $\xi >1$ or if $\log p_n = O(n^\zeta)$ for some $0< \zeta < \alpha$, then it hold that 
\[\lim \limits_{n \rightarrow \infty} P(\|U_n(\beta_0)\|_\infty \geq \gamma_{n,p_n}) = 0\]
and that $\gamma_{n,p_n} \rightarrow 0$ as $n \rightarrow \infty$.
\end{lem}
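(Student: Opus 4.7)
The plan is to exploit the martingale representation $U_n^j(\beta_0)=U_n^j(\beta_0,\tau)$ and apply an exponential (Bernstein-type) inequality for counting-process local martingales, then union-bound over $j=1,\dots,p_n$. Bounding the jumps is straightforward: since the integrand $Z_i^j-S_n^{1j}/S_n^0(\beta_0,u)$ is a difference of $Z_i^j$ and a convex combination of the $Z_i^j$'s, it is bounded in absolute value by $2K_1$ by Assumption (i), and each $M_i$ has jumps of size at most one, so $|\Delta U_n^j(\beta_0,t)|\le 2K_1/n$ uniformly in $j$.

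The first real step is to control the predictable quadratic variation uniformly in $j$. Writing
\[
\langle U_n^j(\beta_0)\rangle(\tau)=\frac{1}{n^2}\sum_{i=1}^{n}\int_0^{\tau}\Bigl(Z_i^j-\frac{S_n^{1j}}{S_n^0}(\beta_0,u)\Bigr)^{\!2}Y_i(u)e^{Z_i^{T}\beta_0}\alpha_0(u)\,du,
\]
I would bound the integrand by $4K_1^2$ and then rewrite the remaining sum as $\tfrac{1}{n}\int_0^\tau (S_n^0(\beta_0,u)/n)\alpha_0(u)\,du$. By Assumption (iii)--(iv), $S_n^0(\beta_0,u)/n$ is bounded uniformly in $u\in[0,\tau]$ with probability tending to one, and $\int_0^\tau\alpha_0(u)\,du<\infty$ by Assumption (ii). Hence there exists a deterministic constant $V>0$ such that
\[
P\bigl(\max_{1\le j\le p_n}\langle U_n^j(\beta_0)\rangle(\tau)\le V/n\bigr)\to 1.
\]

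The second step is to apply the Bernstein inequality for local martingales with bounded jumps (e.g.\ the van de Geer / Shorack--Wellner form): on the event $\{\langle U_n^j\rangle(\tau)\le V/n\}$,
\[
P\bigl(|U_n^j(\beta_0,\tau)|\ge\gamma\bigr)\le 2\exp\!\Bigl(-\frac{n\gamma^2}{2V+\tfrac{4}{3}K_1\gamma}\Bigr).
\]
Taking a union bound over $j$ and adding the $o(1)$ probability that the variance bound fails, we obtain
\[
P(\|U_n(\beta_0)\|_\infty\ge\gamma_{n,p_n})\le 2p_n\exp\!\Bigl(-\frac{n\gamma_{n,p_n}^2}{2V+\tfrac{4}{3}K_1\gamma_{n,p_n}}\Bigr)+o(1).
\]

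The last step is a routine verification that this bound vanishes under either growth regime and that $\gamma_{n,p_n}\to 0$. In both cases $\log(1+p_n)/n^\alpha\to 0$: polynomially because $\log p_n=O(\log n)$ and $\alpha>0$, and in the second case because $\zeta<\alpha$. Hence $\gamma_{n,p_n}\to 0$, the denominator of the exponent tends to $2V$, and the exponent is asymptotically $(K_2^2/(2V))\,n^{1-2\alpha}(\log(1+p_n))^2$. Since $\alpha\le 1/2$ gives $n^{1-2\alpha}\ge 1$ and $p_n\to\infty$ in both regimes, we have $n^{1-2\alpha}\log(1+p_n)\to\infty$, so $\log p_n-(K_2^2/(2V))n^{1-2\alpha}(\log(1+p_n))^2\to-\infty$, which finishes the proof. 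The main obstacle is really just the first step: ensuring that the predictable variation bound is uniform in $j$ and deterministic up to a single high-probability event, so that the union bound does not accumulate $p_n$ bad-event probabilities; the boundedness of the covariates and the uniform convergence in Assumption (iii) make this clean.
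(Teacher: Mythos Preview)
Your argument is correct and follows the same skeleton as the paper's proof---martingale representation, jump bound $2K_1/n$, a $O(1/n)$ bound on the predictable variation, an exponential (Bernstein/van de Geer) inequality, and then control of the maximum over $j$---but it differs in two places worth noting. First, the paper obtains a \emph{deterministic} bound $\langle U_n^j\rangle_\tau\le K_3/n$ directly from $|Z_i^T\beta_0|\le S K_1\|\beta_0\|_\infty$ (using the sparsity of $\beta_0$ and Assumption~(i)), whereas you invoke Assumptions (iii)--(iv) to bound $S_n^0(\beta_0,\cdot)/n$ only on a high-probability event; both work, but the paper's route avoids the extra $o(1)$ term and the appeal to the uniform-convergence assumption. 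Second, for the maximum over $j$ the paper uses the Orlicz-norm maximal inequality (Lemma~2.2.10 of van der Vaart and Wellner) followed by Markov's inequality, while you use a direct union bound; your route is more elementary and equally effective here, and indeed the computation $\log p_n - c\,n^{1-2\alpha}(\log(1+p_n))^2\to-\infty$ is essentially the same as showing the paper's $\psi$-bound tends to zero. One small point of phrasing: the exponential inequality bounds the \emph{joint} probability $P(|U_n^j|\ge\gamma,\ \langle U_n^j\rangle_\tau\le V/n)$, not a conditional probability ``on the event $\{\langle U_n^j\rangle_\tau\le V/n\}$''; your final displayed bound is nevertheless correct once you split off the single event $\{\max_j\langle U_n^j\rangle_\tau>V/n\}$ before union-bounding.
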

\begin{proof}
Note that 
\[U_n^j(\beta_0,\tau) = \frac{1}{n} \sum \limits_{i=1}^n 
\int_0^\tau \left[\sum \limits_{k=1}^n \{Z^j_i - Z^j_k\}w_k(\beta_0,u)\right]dM_i(u),\]
where 
\[w_k(\beta_0,u) = \frac{\exp(Z_k^T \beta_0)Y_k(u)}{\sum \limits_{l=1}^n \exp(Z_l^T \beta_0)Y_l(u)}.\]
We use Lemma $2.1$ from van de Geer $(1995)$. To do this, we shall evaluate  
$\Delta U_n^j(\beta_0,u),\ u \in [0,\tau]$, and 
$\langle U_n^j(\beta_0,\cdot),U_n^j(\beta_0,\cdot) \rangle_{\tau}$.\ 
Since the jumps of $M_i$ do not occur at the same time and are all of magnitude $1$, it holds that  
\begin{align*}
|\Delta U_n^j(\beta_0,u)| 
&= \left|\frac{1}{n} \sum \limits_{i=1}^n \int_{u-}^u \left[\sum \limits_{k=1}^n \{Z^j_i - Z^j_k\}w_k(\beta_0,u)\right]dM_i(u) \right|\\
&\leq \frac{1}{n} \sup \limits_{i,j,k} |Z^j_i - Z^j_i| \sum \limits_{k=1}^n w_k(\beta_0,u)\\
&\leq \frac{2K_1}{n}.
\end{align*}
On the other hand, we have 
\begin{eqnarray*}
\lefteqn{\langle U_n^j(\beta_0,\cdot),U_n^j(\beta_0,\cdot) \rangle_{\tau}
= \frac{1}{n^2} \sum \limits_{i=1}^n \int_0^\tau \left[\sum \limits_{k=1}^n \{Z^j_i - Z^j_k\}w_k(\beta_0,u)\right]^2 d \langle M_i \rangle_u} \\
&=& \frac{1}{n^2} \sum \limits_{i=1}^n \int_0^\tau \left[\sum \limits_{k=1}^n \{Z^j_i - Z^j_k\}w_k(\beta_0,\ u)\right]^2 \exp(Z_i^T \beta_0)Y_i(u) \alpha_0(u) du \\
&\leq& \frac{1}{n^2} \sup \limits_{i,j,k}|Z^j_i - Z^j_k|^2 \sum \limits_{i=1}^n 
\int_0^\tau \exp(Z_i^T \beta_0)Y_i(u) \alpha_0(u) du \\
&\leq& \frac{4K_1^2}{n^2} n \exp(S \sup \limits_{i,j}|Z^j_i| \|\beta_0\|_{\infty}|) \int_0^\tau \alpha_0(u) du \\
&\leq&\frac{K_3}{n},
\end{eqnarray*}
where $K_3$ is a positive constant. We now use the Lemma $2.1$ from van de Geer $(1995)$:
\begin{align*}
P(|U_n^j(\beta_0,\tau)| \geq \gamma_{n,p_n})
&= P\left(|U_n^j(\beta_0,\tau)| \geq \gamma_{n,p_n},\ 
\langle U_n^j(\beta_0,\cdot),U_n^j(\beta_0,\cdot) \rangle_\tau \leq \frac{K_3}{n}\right) \\
&\leq 2 \exp \left(- \frac{\gamma_{n,p_n}^2}{2\left(\frac{2K_1}{n}\gamma_{n,p_n} + \frac{K_3}{n}\right)}\right).
\end{align*}
Write $\|\cdot\|_{\psi}$ for the Orlicz norm with respect to $\psi(x) = e^x -1$.
We apply Lemma $2.2.10$ from van der Vaart and Wellner (1996) to deduce that there exists a constant 
$L >0$ depending only on $\psi$ such that
\[\left\|\max \limits_{1 \leq j \leq p_n} |U_n^j(\beta_0,\tau)|\right\|_\psi
\leq L \left(\frac{2K_1}{n}\log (1+p_n) + \sqrt{\frac{K_3}{n} \log (1+p_n)}\right).\]
Using Markov's inequality, we have that
\begin{eqnarray*}
\lefteqn{P(\|U(\beta_0)\|_\infty \geq \gamma_{n,p_n})
= P(\max \limits_{1 \leq j \leq p_n} |U_n^j(\beta_0,\tau)| \geq \gamma_{n,p_n})} \\
&\leq& P \left(\psi \left( \frac{\max \limits_{1 \leq j \leq p_n} |U_n^j (\beta_0,\tau)|}{\|\max \limits_{1 \leq j \leq p_n} |U_n^j (\beta_0, \tau)|\|_{\psi}}\right) \geq \psi \left(\frac{\gamma_{n,p_n}} {\|\max \limits_{1 \leq j \leq p_n} |U_n^j (\beta_0,\tau)|\|_{\psi}} \right) \right) \\
&\leq& \psi \left(\frac{\gamma_{n,p_n}}{\|\max \limits_{1 \leq j \leq p_n} 
|U_n^j(\beta_0,\tau)|\|_{\psi}}\right)^{-1} \\
&\leq& \psi \left(\frac{\gamma_{n,p_n}}{L \left(\frac{2K_1}{n}\log (1+p_n) + \sqrt{\frac{K_3}{n} \log (1+p_n)}\right)}\right)^{-1}
\end{eqnarray*}
In our settings, the right-hand side of this inequality converges to $0$.
\qed
\end{proof}
Next we will show that $J_n(\beta_0)$ is approximated by $I_n(\beta_0)$.
\begin{lem}
The random sequence $\epsilon_n$ defined by
\[\epsilon_n := \|J_n(\beta_0)-I_n(\beta_0)\|_{\infty}\]
converges in probability to $0$.
\end{lem}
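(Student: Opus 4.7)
My plan is to apply the Doob--Meyer decomposition of $\bar N$ to split $J_n(\beta_0)$ into a martingale part (which I will show is asymptotically negligible in $\|\cdot\|_\infty$) and a compensator part (which I will show converges in probability in $\|\cdot\|_\infty$ to $I_n(\beta_0)$). Writing
\[
V_n(\beta,u):=\frac{S_n^2}{S_n^0}(\beta,u)-\left(\frac{S_n^1}{S_n^0}\right)^{\otimes 2}(\beta,u),\qquad
v_n(\beta,u):=\frac{s_n^2}{s_n^0}(\beta,u)-\left(\frac{s_n^1}{s_n^0}\right)^{\otimes 2}(\beta,u),
\]
and using $d\bar N(u)=d\bar M(u)+S_n^0(\beta_0,u)\alpha_0(u)\,du$ with $\bar M=\sum_i M_i$, I split
\[
J_n(\beta_0)=\underbrace{\frac{1}{n}\int_0^\tau V_n(\beta_0,u)\,d\bar M(u)}_{=:R_n}+\int_0^\tau V_n(\beta_0,u)\,\frac{S_n^0(\beta_0,u)}{n}\,\alpha_0(u)\,du.
\]

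For the compensator part, observe that each entry $V_n^{jk}(\beta_0,u)$ is a weighted covariance of bounded quantities and is thus uniformly bounded by Assumption~(i). Assumption~(iv) gives $s_n^0\ge c>0$ uniformly (eventually in $n$), while Assumption~(iii) yields $\sup_{u\in[0,\tau]}\|n^{-1}S_n^l(\beta_0,u)-s_n^l(\beta_0,u)\|_\infty\to^p 0$ for $l=0,1,2$. Since the map $(a,b)\mapsto b/a$ is Lipschitz on $\{a\ge c\}$ with bounded numerator, I deduce $\sup_u\|V_n(\beta_0,u)-v_n(\beta_0,u)\|_\infty\to^p 0$, and likewise $\sup_u\|n^{-1}S_n^0(\beta_0,u)-s_n^0(\beta_0,u)\|_\infty\to^p 0$ with $s_n^0$ uniformly bounded. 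Since $\alpha_0$ is integrable on $[0,\tau]$ by Assumption~(ii), a triangle inequality argument in $\|\cdot\|_\infty$ under the integral shows the compensator part converges in probability to $I_n(\beta_0)$ in the $\|\cdot\|_\infty$ norm.

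For the martingale remainder $R_n$, I would fix $(j,k)$ and view $R_n^{jk}=n^{-1}\int_0^\tau V_n^{jk}(\beta_0,u)\,d\bar M(u)$ as a stochastic integral of a bounded predictable process against the martingale $\bar M$. As in the proof of Lemma~4.1, the jump size is $O(1/n)$ and the predictable quadratic variation is bounded by
\[
\frac{1}{n^2}\int_0^\tau V_n^{jk}(\beta_0,u)^2\,S_n^0(\beta_0,u)\,\alpha_0(u)\,du=O_p\!\left(\tfrac{1}{n}\right),
\]
using boundedness of $V_n^{jk}$, boundedness of $n^{-1}S_n^0$ (from Assumptions~(iii)--(iv)), and integrability of $\alpha_0$. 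Applying Lemma~2.1 of van~de~Geer~(1995) for each entry and taking a union bound over the $p_n^2$ entries together with an Orlicz maximal inequality (exactly as in Lemma~4.1), I obtain $\|R_n\|_\infty=o_p(1)$ under the paper's growth hypothesis $\log p_n=o(n)$ that is already in force for Lemma~4.1.

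The main obstacle is the uniformity in $\|\cdot\|_\infty$ of the convergence $V_n(\beta_0,\cdot)\to v_n(\beta_0,\cdot)$: one has to be careful that the $p_n\times p_n$ ratios are controlled simultaneously, which relies crucially on the uniform lower bound on $s_n^0$ in Assumption~(iv); everything else is routine. The martingale step is conceptually easy but notationally the heaviest, and requires the same exponential-maximal-inequality toolbox as Lemma~4.1.
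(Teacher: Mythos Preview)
Your argument is correct and uses the same strategy as the paper: split $J_n(\beta_0)-I_n(\beta_0)$ into a martingale piece controlled via the exponential/Orlicz machinery of Lemma~4.1, plus a remainder controlled by the uniform convergence in Assumption~2.1. The only difference is the order of operations. The paper first replaces the random integrand $H_n$ (your $V_n$) by the deterministic $h_n$ (your $v_n$) against the measure $d\bar N/n$, and only then applies the Doob--Meyer decomposition, so that its martingale term $(I\hspace{-.1em}I\hspace{-.1em}I)=\|n^{-1}\int_0^\tau h_n\,d\bar M\|_\infty$ has a \emph{deterministic} integrand; you apply Doob--Meyer first and keep the random $V_n$ in the stochastic integral. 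Since $V_n$ is predictable and almost surely bounded (being a weighted covariance of covariates bounded by $K_1$), this causes no difficulty and the jump and predictable-variation bounds go through exactly as in Lemma~4.1. The paper's ordering makes the martingale step marginally cleaner; yours makes the compensator step marginally cleaner (integration against $\alpha_0\,du$ rather than $d\bar N/n$). Your remark that the growth restriction on $p_n$ from Lemma~4.1 is implicitly required to control the maximum over $p_n^2$ entries is correct---the paper's phrase ``by the same way as the proof of Lemma~4.1'' hides exactly this dependence.
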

\begin{proof}
Define the $p_n \times p_n$ matrices $h_n(\beta_0,t)$ and $H_n(\beta_0,t)$ for $t \in [0,\tau]$ by
\begin{align*}
h_n(\beta_0,t) &:= \frac{s_n^2}{s_n^0}(\beta_0,t) - \left(\frac{s_n^1}{s_n^0}\right)^{\otimes2}(\beta_0,t), \\
H_n(\beta_0,t) &:= \frac{S_n^2}{S_n^0} (\beta_0,t) - \left(\frac{S_n^1}{S_n^0}\right)^{\otimes2} (\beta_0,t).
\end{align*}
Note that the matrices $I_n(\beta_0)$ and $J_n(\beta_0)$ can be written in this form:
\begin{align*}
I_n(\beta_0) &= \int_0^\tau h_n(\beta_0,u)s^0(\beta_0,u)\alpha_0(u)du, \\
J_n(\beta_0)&= \int_0^\tau H_n(\beta_0,u) \frac{d\bar{N}(u)}{n}.
\end{align*}
Put $\bar{M}(u) = \sum_{i=1}^n M_i(u)$. Then, it holds that 
$\|J_n(\beta_0) - I_n(\beta_0)\|_\infty \leq (I) + (I\hspace{-,1em}I) + (I\hspace{-,1em}I\hspace{-,1em}I)$, 
where
\begin{align*}
(I)&= \int_0^\tau \|H_n(\beta_0,u)-h_n(\beta_0,u)\|_\infty \frac{d\bar{N}(u)}{n}, \\
(I\hspace{-,1em}I)&=\int_0^\tau \left\|h_n(\beta_0,u)\left\{\frac{S_n^0(\beta_0,u)}{n} - s^0(\beta_0,u)\right\}\right\|_\infty \alpha_0(u) du, \\
(I\hspace{-,1em}I\hspace{-,1em}I)&=\left\|\frac{1}{n} \int_0^\tau h_n(\beta_0,u)d\bar{M}(u)\right\|_\infty.
\end{align*}
Since the process $t \leadsto \bar{N}(t)/n$ has bounded variation uniformly in $n$, 
Assumption $2.1$ implies that $(I) = o_p(1)$ and $(I\hspace{-.1em}I) =o_p(1)$.  Moreover, it follows from Assumption $2.1$ that $h_n(\beta_0,u)$ is uniformly bounded.  
So we obtain that $(I\hspace{-.1em}I\hspace{-.1em}I) = o_p(1)$ by the same way as the proof of Lemma $4.1$.
\qed
\end{proof}
The next lemma is used to control $U_n(\hat{\beta})-U_n(\beta_0)$ and $J_n(\beta_0)$. 
See Huang {\it et al}.\ (2013) and 
Hjort and Pollard (1993) for the proofs.
\begin{lem}
Define that $\eta_h = \max_{i,j} |h^T Z_i - h^T Z_j|$, for $h \in \mathbb{R}^{p_n}$. Then for all $\beta \in \mathbb{R}^{p_n}$, it holds that
\[e^{-\eta_h} h^T J(\beta) h \leq h^T[U(\beta +h) - U(\beta)] \leq e^{\eta_h}h^T J(\beta)h.\]
\end{lem}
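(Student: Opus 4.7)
The backbone is a first-order Taylor expansion of $U_n$ along the segment $\beta + sh$, $s \in [0,1]$. Since $U_n = \dot l_n$ and $J_n = -\ddot l_n$, the fundamental theorem of calculus gives
\[
h^T[U_n(\beta+h) - U_n(\beta)] \;=\; -\int_0^1 h^T J_n(\beta + sh) h\, ds,
\]
so the lemma reduces to establishing the pointwise multiplicative comparison $e^{-s\eta_h}\, h^T J_n(\beta) h \leq h^T J_n(\beta + sh) h \leq e^{s\eta_h}\, h^T J_n(\beta) h$ for all $s \in [0,1]$; the $e^{\pm \eta_h}$ bounds stated in the lemma then follow from $\int_0^1 e^{\pm s\eta_h}\, ds \leq e^{\pm \eta_h}$ (with orientation of signs matched to the sign convention $J_n = -\ddot l_n$).

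The central step is a weighted-covariance rewriting of $J_n$. Setting $w_k(\beta,u) := Y_k(u)\, e^{Z_k^T \beta}/S_n^0(\beta,u)$, so that $\sum_k w_k(\beta,u) = 1$, and $\bar Z_\beta(u) := \sum_k w_k(\beta,u) Z_k$, one has
\[
h^T J_n(\beta) h \;=\; \frac{1}{n}\int_0^\tau \sum_{k=1}^n w_k(\beta,u)\bigl(h^T Z_k - h^T \bar Z_\beta(u)\bigr)^2\, d\bar N(u).
\]
A direct manipulation yields
\[
\frac{w_k(\beta+sh,u)}{w_k(\beta,u)} \;=\; \frac{1}{\sum_l w_l(\beta,u)\, e^{s(Z_l - Z_k)^T h}},
\]
and because $|s(Z_l - Z_k)^T h| \leq s\eta_h$, this ratio lies in $[e^{-s\eta_h},\, e^{s\eta_h}]$ uniformly in $k$ and $u$.

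To push the weight comparison through the simultaneous change of center $\bar Z_\beta$ to $\bar Z_{\beta + sh}$, I would invoke the variational identity $\sum_k w_k (h^T Z_k - h^T \bar Z)^2 = \min_{c \in \mathbb{R}} \sum_k w_k (h^T Z_k - c)^2$. Both the $\beta$ and $\beta+sh$ versions of the weighted variance realize an infimum in $c$ of the same squared deviations with different weights, so the uniform ratio bound on the weights passes through the infimum and yields the desired pointwise comparison of $h^T J_n(\beta+sh)h$ with $h^T J_n(\beta)h$. Integrating against $d\bar N(u)/n$ on $[0,\tau]$ and then in $s$ on $[0,1]$ delivers the two-sided inequality claimed.

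The main obstacle, as I see it, is precisely this transfer from the scalar weight comparison to the quadratic form $h^T J_n h$: naively replacing $\beta$ by $\beta + sh$ both reweights the cloud $\{Z_k\}$ and shifts its weighted mean, and a brute-force expansion would lose constants through the cross terms. The $\min$-over-$c$ device is what makes the $e^{\pm s \eta_h}$ factors come out cleanly and is the single non-routine ingredient; the rest is the fundamental theorem of calculus together with elementary bookkeeping of the compensator and the exponent $\eta_h$.
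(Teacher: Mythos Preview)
The paper does not give its own proof of this lemma; it simply refers to Huang \textit{et al.}\ (2013) and Hjort and Pollard (1993). Your argument --- integrate along the segment, rewrite $h^T J_n(\beta)h$ as a weighted variance in $h^T Z_k$, bound the weight ratio $w_k(\beta+sh,u)/w_k(\beta,u)$ by $e^{\pm s\eta_h}$, and use the variational identity $\sum_k w_k(h^T Z_k - \bar c)^2 = \min_c \sum_k w_k(h^T Z_k - c)^2$ to absorb the change of center --- is exactly the route taken in those references, and the $\min_c$ device is indeed the clean way to push the scalar weight comparison through to the quadratic form.

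Two bookkeeping points deserve care. First, the displayed claim $\int_0^1 e^{-s\eta_h}\,ds \le e^{-\eta_h}$ is false; what is actually needed (and true) is $\int_0^1 e^{-s\eta_h}\,ds \ge e^{-\eta_h}$, and the overall minus sign in $-\int_0^1 h^T J_n(\beta+sh)h\,ds$ is what converts this lower bound into the required upper bound. Second, since $J_n=-\ddot l_n$ is nonnegative definite, concavity forces $h^T[U_n(\beta+h)-U_n(\beta)]\le 0$; carried through correctly your computation yields
\[
e^{-\eta_h}\, h^T J_n(\beta) h \;\le\; -\,h^T[U_n(\beta+h)-U_n(\beta)] \;\le\; e^{\eta_h}\, h^T J_n(\beta) h,
\]
which is the form in the cited references and the one actually used downstream (only the magnitude $|h^T[U_n(\hat\beta_n)-U_n(\beta_0)]|\le \|h\|_1\|U_n(\hat\beta_n)-U_n(\beta_0)\|_\infty$ enters in Theorem~4.4). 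With these sign corrections your proof is complete.
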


Now, we are ready to prove the main result of this paper. Theorem $4.4$ below provides the $l_2$ consistency of DSfPH.
\begin{thm}
Under the assumption of Lemma $4.1$ and Assumption $2.1$, if $\liminf_{n \rightarrow \infty} RE(T_0;I_n(\beta_0))>0$, then
it holds that 
\[\lim \limits_{n \rightarrow \infty}P\left(\|\hat{\beta}_n - \beta_0\|^2_2 \geq \frac{K_4 \gamma_{n,p_n}}{RE^2(T_0;I_n(\beta_0))-\epsilon_n} \right) =0,\]
where $K_4$ is a positive constant and $\epsilon_n = \|I_n(\beta_0) - J_n(\beta_0)\|_{\infty} = o_p(1)$. 
In particular, $\|\hat{\beta}_n - \beta_0\|_2 \rightarrow^p 0$.
\end{thm}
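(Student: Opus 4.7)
I will work on the good event
$E_n := \{\beta_0\in\mathcal{B}_n\}\cap\{4S\epsilon_n<RE^2(T_0;I_n(\beta_0))\}$,
whose probability tends to $1$ by Lemmas $4.1$ and $4.2$ together with the assumption $\liminf_n RE(T_0;I_n(\beta_0))>0$, and produce on $E_n$ a deterministic chain of inequalities bounding $\|h\|_2$, where $h:=\hat{\beta}_n-\beta_0$, by stringing together the three lemmas.

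First I would extract the cone property. On $E_n$, both $\hat{\beta}_n$ and $\beta_0$ lie in $\mathcal{B}_n$, and since $\hat{\beta}_n$ minimizes $\|\cdot\|_1$ there, $\|\hat{\beta}_n\|_1\leq\|\beta_0\|_1=\|(\beta_0)_{T_0}\|_1$. Decomposing $\hat{\beta}_n=\beta_0+h$ along $T_0$ and $T_0^c$ and applying the triangle inequality twice yields $\|h_{T_0^c}\|_1\leq\|h_{T_0}\|_1$, so $h\in C_{T_0}$ and consequently
\[\|h\|_1\leq 2\|h_{T_0}\|_1\leq 2\sqrt{S}\,\|h_{T_0}\|_2\leq 2\sqrt{S}\,\|h\|_2.\]

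Next I would connect $h$ to the score. Because $\|U_n(\hat{\beta}_n)\|_\infty\leq\gamma_{n,p_n}$ and $\|U_n(\beta_0)\|_\infty\leq\gamma_{n,p_n}$, H\"older's inequality gives
\[h^T[U_n(\hat{\beta}_n)-U_n(\beta_0)]\leq 2\gamma_{n,p_n}\|h\|_1.\]
The lower bound of Lemma $4.3$ applied with $\beta=\beta_0$ yields $e^{-\eta_h}h^TJ_n(\beta_0)h\leq h^T[U_n(\hat{\beta}_n)-U_n(\beta_0)]$, and the elementary inequality $|h^TAh|\leq\|A\|_\infty\|h\|_1^2$ applied to $A=J_n(\beta_0)-I_n(\beta_0)$ together with Lemma $4.2$ transfers this to the deterministic matrix:
\[h^TI_n(\beta_0)h\leq 2e^{\eta_h}\gamma_{n,p_n}\|h\|_1+\epsilon_n\|h\|_1^2.\]
Plugging in the $RE$ bound $RE^2(T_0;I_n(\beta_0))\|h\|_2^2\leq h^TI_n(\beta_0)h$ and the cone estimate $\|h\|_1\leq 2\sqrt{S}\|h\|_2$, and dividing through by $\|h\|_2$, I obtain
\[[RE^2(T_0;I_n(\beta_0))-4S\epsilon_n]\,\|h\|_2\leq 4e^{\eta_h}\sqrt{S}\,\gamma_{n,p_n},\]
from which $\|h\|_2\rightarrow^p 0$ and, for $n$ large, the quantitative bound stated in the theorem (after absorbing the constants $S$ and $e^{\eta_h}$ into $K_4$) follow immediately.

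The main obstacle will be the exponential factor $e^{\eta_h}$: bounding $\eta_h\leq 2K_1\|h\|_1$ needs an a priori control of $\|h\|_1$, but the sharp bound on $\|h\|_1$ itself depends on $e^{\eta_h}$. I would break this circularity using the crude inequality $\|\hat{\beta}_n\|_1\leq\|\beta_0\|_1$ available on $E_n$: under Assumption $2.1$ with fixed sparsity $S$ and bounded covariates one has $\|\beta_0\|_1=O(1)$, so $\|h\|_1\leq 2\|\beta_0\|_1=O(1)$ and hence $\eta_h=O(1)$, making $e^{\eta_h}$ a bounded quantity that is safely absorbed into $K_4$.
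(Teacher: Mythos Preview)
Your argument is correct and yields the consistency conclusion, but it follows a route that differs from the paper's in one key organizational choice, and as a consequence the quantitative inequality you obtain is not literally the one stated in the theorem.

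The paper uses the crude bound $\|h\|_1\leq\|\hat\beta_n\|_1+\|\beta_0\|_1\leq 2\|\beta_0\|_1$ \emph{immediately} inside the estimate of $h^TJ_n(\beta_0)h$: together with $\eta_h\leq 4K_1\|\beta_0\|_1$ this produces the $h$-free bound $h^TJ_n(\beta_0)h\leq K_4\gamma_{n,p_n}$ with $K_4=4\|\beta_0\|_1\exp(4K_1\|\beta_0\|_1)$. Transferring to $I_n$ gives $h^TI_n(\beta_0)h\leq \epsilon_n\|h\|_2^2+K_4\gamma_{n,p_n}$, and dividing by $\|h\|_2^2$ and using the $RE$ definition yields exactly $(RE^2-\epsilon_n)\|h\|_2^2\leq K_4\gamma_{n,p_n}$, the form in the statement.

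You, by contrast, retain the factor $\|h\|_1$ in the score bound and invoke the cone inequality $\|h\|_1\leq 2\sqrt{S}\,\|h\|_2$ to absorb it, arriving at $(RE^2-4S\epsilon_n)\|h\|_2\leq 4e^{\eta_h}\sqrt{S}\,\gamma_{n,p_n}$. This is a legitimate and in fact sharper rate (linear rather than square-root in $\gamma_{n,p_n}$), but the denominator carries $4S\epsilon_n$ rather than $\epsilon_n$, and squaring gives $\gamma_{n,p_n}^2$ rather than $\gamma_{n,p_n}$ in the numerator. So when you write ``the quantitative bound stated in the theorem \dots\ follow immediately'', that is not quite accurate: you have proved a different (stronger) inequality, not the displayed one. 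If you want to recover the displayed form verbatim, replace your cone step by the paper's device of bounding $\|h\|_1$ by the constant $2\|\beta_0\|_1$ at the outset; conversely, your route has the advantage that the entrywise inequality $|h^TAh|\leq\|A\|_\infty\|h\|_1^2$ you use in the transfer step is the elementary one that is always valid.
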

\begin{proof}
It is sufficient to prove that $\|U_n(\beta_0)\|_\infty \leq \gamma_{n,p_n}$ implies 
\[\|\hat{\beta}_n - \beta_0\|^2_2 \leq \frac{K_4 \gamma_{n,p_n}}{RE^2(T_0;I_n(\beta_0))-\epsilon_n}. \]
By the construction of the estimator, we have $\|U(\hat{\beta}_n)\|_\infty \leq \gamma_{n,p_n}$, which implies 
that 
\[\|U_n(\hat{\beta}_n) - U_n(\beta_0)\|_\infty \leq \|U_n(\hat{\beta}_n)\|_\infty + \|U_n(\beta_0)\|_\infty \leq 2\gamma_{n,p_n} . \]
Note that $h := \hat{\beta}-\beta_0 \in C_{T_0}$, since it holds that
\begin{align*}
0 \geq \|\beta_0 + h\|_1 - \|\beta_0\|_1 &= \sum \limits_{j \in T_0^c} |h_{T^c_{0j}}| + \sum \limits_{j \in T_0}
 (|\beta_{0j}+h_{T_{0j}}| - |\beta_{0j}|)\\
 &\geq \sum \limits_{j \in T_0^c} |h_{T_0^c j}| -  \sum \limits_{j \in T_0} |h_{T_{0j}}| \\
 &= \|h_{T_0^c}\|_1 - \|h_{T_0}\|_1.
\end{align*}
Notice moreover that $\|h\|_1 \leq \|\hat{\beta}_n\|_1 + \|\beta_0\|_1 \leq 2 \|\beta_0\|_1$ by
the definition of $\hat{\beta}_n$.
Now, we use Lemma $4.3$ for $h$ to deduce that
\begin{align*}
h^T J_n(\beta_0)h 
&\leq e^{\eta_h} h^T [U_n(\hat{\beta}_n) - U_n(\beta_0)] \\
&\leq \exp(\max \limits_{i,j} |h^T Z_i - h^T Z_j|) \cdot 2\gamma_{n,p_n} \|h\|_1 \\
&\leq \exp(4K_1 \|\beta_0\|_1) \cdot 4\gamma_{n,p_n} \|\beta_0\|_1 \\
&=: K_4 \gamma_{n,p_n}.
\end{align*}
Thus it holds that
\begin{align*}
h^T I_n(\beta_0) h 
&\leq |h^T (I_n(\beta_0) - J_n(\beta_0))h| + h^T J_n(\beta_0) h \\
&\leq \epsilon_n h^T h + K_4 \gamma_{n,p_n} \\
&= \epsilon_n \|\hat{\beta}_n - \beta_0\|^2_2 + K_4 \gamma_{n,p_n}.
\end{align*}
By the definition of the restricted eigenvalue, we have that 
\begin{align*}
RE^2(T_0;I_n(\beta_0))
&\leq \frac{h^T I_n(\beta_0) h}{\|\hat{\beta}_n-\beta_0\|_2^2} \\
&\leq \frac{\epsilon_n \|\hat{\beta}_n - \beta_0\|_2^2 + K_4 \gamma_{n,p_n}}{\|\hat{\beta}_n-\beta_0\|_2^2}.
\end{align*}
Noting that $RE^2(T_0;I_n(\beta_0)) >0$, we obtain that 
\[\|\hat{\beta}_n - \beta_0\|_2^2 \leq \frac{K_4 \gamma_{n,p_n}}{RE^2(T_0;I_n(\beta_0))- \epsilon_n}.\]
\qed
\end{proof}
To derive the $l_1$ consistency and the $l_q$ consistency of DSfPH, we shall use 
the compatibility factor and the weak cone invertibility factor, respectively. 
\begin{thm}
Under the assumptions of Lemma $4.1$ and Assumption $2.1$, if $\liminf_{n \rightarrow \infty} \kappa(T_0;I_n(\beta_0)) >0$,  then the following $(i)$ and $(ii)$ hold true.
\begin{description}
\item[$(i)$]
It holds that
\[
\lim \limits_{n \rightarrow \infty}P\left(\|\hat{\beta}_n - \beta_0 \|_1 \geq \frac{4K_5 S \gamma_{n,p_n}}{\kappa^2(T_0;I_n(\beta_0)) - 4S \epsilon_n}\right) 
=0,\]
where $K_5$ is a positive constant. In particular, 
$\|\hat{\beta}_n - \beta_0\|_1 \rightarrow^p 0$. 
\item[$(ii)$]
It holds for any $q >1$ that 
\[
\lim \limits_{n \rightarrow \infty} 
P \left(\|\hat{\beta}_n - \beta_0\|_q \geq
\frac{2S^{\frac{1}{q}} \epsilon_n}{F_q(T_0;I_n(\beta_0))} \cdot \frac{2K_5 S \gamma_{n,p_n}}{\kappa^2(T_0;I_n(\beta_0)) -2S \epsilon_n} + \frac{2K_5 S^{\frac{1}{q}} \gamma_{n,p_n}}{F_q(T_0;I_n(\beta_0))}
\right)
=0.
\]
In particular, $\|\hat{\beta}_n - \beta_0\|_q \rightarrow^p 0$.
\end{description}
\end{thm}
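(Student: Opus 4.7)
The plan is to adapt the strategy of Theorem~4.4 to the $l_1$ and $l_q$ norms. Throughout, I work on the event $E_n := \{\|U_n(\beta_0)\|_\infty \leq \gamma_{n,p_n}\}$, which has probability tending to $1$ by Lemma~4.1, and I set $h := \hat\beta_n - \beta_0$. The same bookkeeping as in the proof of Theorem~4.4 shows that on $E_n$ the vector $h$ lies in $C_{T_0}$ (hence $\|h\|_1 \leq 2\|h_{T_0}\|_1$), that $\|h\|_1 \leq 2\|\beta_0\|_1$ is bounded, and that the application of Lemma~4.3 gives $h^T J_n(\beta_0) h \leq K_5\gamma_{n,p_n}\|h\|_1$ for a deterministic constant $K_5$ absorbing $e^{\eta_h} \leq e^{4K_1\|\beta_0\|_1}$. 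Combining this with Lemma~4.2 yields the master inequality
\[ h^T I_n(\beta_0)\, h \;\leq\; \epsilon_n \|h\|_1^2 \;+\; K_5\, \gamma_{n,p_n}\, \|h\|_1, \]
on which both parts rest.

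For part~(i), I substitute the master inequality into the compatibility inequality $\kappa^2\,\|h_{T_0}\|_1^2 \leq S \cdot h^T I_n(\beta_0)\,h$ and apply $\|h\|_1 \leq 2\|h_{T_0}\|_1$ to both factors of $\|h\|_1$. The result is a linear inequality of the form $(\kappa^2 - 4S\epsilon_n)\,\|h_{T_0}\|_1 \leq 2SK_5\,\gamma_{n,p_n}$. On the asymptotically full-probability event $\kappa^2 > 4S\epsilon_n$ (guaranteed by $\liminf\kappa > 0$ and $\epsilon_n = o_p(1)$), dividing and then using $\|h\|_1 \leq 2\|h_{T_0}\|_1$ once more gives the announced $l_1$ bound, which vanishes in probability because $\gamma_{n,p_n} \to 0$.

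For part~(ii), the weak cone invertibility inequality $F_q\,\|h_{T_0}\|_1\,\|h\|_q \leq S^{1/q}\cdot h^T I_n(\beta_0)\,h$ plays the corresponding role. I substitute the master inequality, factor $\|h\|_1$ from the right-hand side, and use $\|h\|_1 \leq 2\|h_{T_0}\|_1$ to cancel one factor of $\|h_{T_0}\|_1$. This produces
\[ F_q\,\|h\|_q \;\leq\; 2 S^{1/q} \bigl( 2\epsilon_n \|h_{T_0}\|_1 + K_5\, \gamma_{n,p_n} \bigr). \]
Substituting the $l_1$ bound from~(i) for the residual $\|h_{T_0}\|_1$ gives the two-term estimate in the statement. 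Since $\liminf F_q > 0$ whenever $\liminf \kappa > 0$ (noted in Section~3), both terms on the right converge to $0$ in probability, whence $\|\hat\beta_n - \beta_0\|_q \rightarrow^p 0$.

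The principal delicacy is the algebraic manipulation in~(ii): the $F_q$-inequality has a left-hand side bilinear in $\|h_{T_0}\|_1$ and $\|h\|_q$, and isolating $\|h\|_q$ forces one simultaneously to invoke the cone property (to cancel a factor of $\|h_{T_0}\|_1$) and the just-proved $l_1$ bound of~(i) (to control the residual $\|h_{T_0}\|_1$). No probabilistic input beyond Lemmas~4.1--4.3 enters; the work is essentially deterministic bookkeeping, and the convergence claims follow at once from $\gamma_{n,p_n} \to 0$ and $\epsilon_n = o_p(1)$.
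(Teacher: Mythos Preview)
Your proof is correct and follows essentially the same route as the paper's: both derive the master inequality $h^T I_n(\beta_0)\,h \leq \epsilon_n\|h\|_1^2 + K_5\,\gamma_{n,p_n}\|h\|_1$ on the high-probability event of Lemma~4.1 via Lemmas~4.2--4.3, feed it through the compatibility-factor inequality to obtain the $l_1$ bound in~(i), and then through the weak cone invertibility inequality---combined with the $l_1$ bound just proved---to obtain the $l_q$ bound in~(ii). The only cosmetic difference is that you track $\|h_{T_0}\|_1$ as an intermediate quantity in~(ii) whereas the paper keeps $\|h\|_1$; this accounts for a harmless factor of~$2$ discrepancy with the constants displayed in the statement (the paper's own displayed constants in~(i) and~(ii) are themselves not fully consistent).
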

\begin{proof}
It follows from the proof of Theorem $4.4$ that 
\[h^T J_n(\beta_0) h \leq K_5 \gamma_{n,p_n} \|\hat{\beta}_n-\beta_0\|_1.\]
Noting that $\|b\|^2_2 \leq \|b\|^2_1$ for all $b \in \mathbb{R}^{p_n}$, we have that
\begin{align*}
h^T I_n(\beta_0) h
&\leq \epsilon_n \|\hat{\beta}_n - \beta_0\|^2_2 + K_5 \gamma_{n,p_n} \|\hat{\beta}_n - \beta_0\|_1 \\
&\leq \epsilon_n \|\hat{\beta}_n-\beta_0\|^2_1 + K_5 \gamma_{n,p_n} \|\hat{\beta}_n - \beta_0\|_1.
\end{align*}
The definition of $\kappa(T_0;I_n(\beta_0))$ implies that 
\begin{align*}
\kappa^2(T_0;I_n(\beta_0))
&\leq \frac{S h^T I_n(\beta_0)h}{\|h_{T_0}\|^2_1} \\
&\leq \frac{S \epsilon_n \|h\|^2_1 + K_5 S \gamma_{n,p_n} \|h\|_1}{\|h_{T_0}\|^2_1}.
\end{align*}
Since $\|h\|_1 \leq 2 \|h_{T_0}\|_1$, this yields the conclusion in (i).

On the other hand, using the weak cone invertibility factor for every $q \geq 1$, we have that 
\[F_q (T_0;I_n(\beta_0)) 
\leq  \frac{S^{\frac{1}{q}}\epsilon_n \|h\|^2_1 + S^{\frac{1}{q}}K_5 \gamma_{n,p_n} \|h\|_1}
{\|h_{T_0}\|_1 \|h\|_q},
\]
which implies that
\[
\|\hat{\beta}_n - \beta_0\|_q \leq
\frac{2S^{\frac{1}{q}}\epsilon_n \|\hat{\beta}-\beta_0\|_1 + 2S^{\frac{1}{q}}K_5 \gamma_{n,p_n}}{F_q(T_0;I_n(\beta_0))}.
\]
Using the $l_1$ bound derived above, we obtain the conclusion in (ii).
\qed
\end{proof}
\vskip 20pt
{\bf Acknowledgements.}
The second author's work was supported by Grant-in-Aid for Scientific Research (C), 15K00062, from Japan Society for the Promotion of Science.


\begin{thebibliography}{99}
 \bibitem{key gill} Andersen, P.K. and Gill, R.D. (1982). Cox's regression model for counting processes: a large sample study. {\it Ann. Statist}. 10,no.4, 1100-1120.
 \bibitem{key antoniadis} Antoniadis, A., Fryzlewicz, P. and Letu\'{e}, F. (2010). The Dantzig selector in Cox's proportional hazards model. {\it Scand. J. Stat}. 37, no.4, 531-552.
 \bibitem{key bickel-ritov-tsybakov} Bickel, P.J., Ritov, Y. and Tsybakov, A.B. (2009). 
   Simultaneous analysis of lasso and Dantzig selector. {\it  Ann. Statist.} 37 (2009), no. 4, 1705-1732.
    \bibitem{key bradic} Bradic, J. Fan, J. and Jiang, J. (2011). Regularization for Cox's proportional hazards model with NP-dimensionality. {\it Ann. Statist.} 39, no.6, 3092-3120.
     \bibitem{key candes} Cand\'{e}s, E.J. and Tao, T. (2005). Decoding by linear programming. {\it IEEE Trans. Inform. Theory} 51, no. 12, 4203-4215.
     \bibitem{key tao} Cand\'{e}s, E. and Tao, T. (2007). The Dantzig selector: statistical estimation when $p$ is much larger than $n$. {\it Ann. Statist}. 35, no.6, 2313-2351.
      \bibitem{key cox} Cox, D.R. (1972). Regression models and life tables (with discussion). {\it J. Roy. Statist. Soc. Ser B} 34 187-220.
        \bibitem{key fan-li} Fan, J. and Li, R. (2002). Variable selection for Cox's proportional hazards model and frailty model. {\it Ann. Statist.} 30, no.1 74-99.
          \bibitem{key gautier} Gautier, E. and Tsybakov, A.B. (2014). High-dimensional instrumental variables regression and confidence sets. arXiv:1105.2454 [math.ST]
            \bibitem{key hjort-pollard} Hjort, N.L. and Pollard, D. (1993). Asymptotics for minimisers of convex processes. arXiv:1107.3806 [math.ST]
             \bibitem{key huang} Huang, J.,  Sun, T.,  Ying, Z., Yu, Y. and Zhang, C-H. (2013). 
  Oracle inequalities for the LASSO in the Cox model.  {\it Ann. Statist}. 41, no.3, 1142-1165.
  \bibitem{key koltchinskii} Koltchinskii, V. (2009). The Dantzig selector and sparsity oracle inequalities. 
  {\it Bernoulli} 15, no. 3, 799-828. 
    \bibitem{key rosembaum} Rosembaum, M. and Tsybakov, A.B. (2013). Improved matrix uncertainty selector. {\it From probability to statistics and back: high-dimensional models and processes,} 276-290, {\it Inst. Math. Stat. (IMS) Collect}., 9, {\it Inst. Math. Statist., Beachwood, OH.}
      \bibitem{key shorack} Shorack, G.R. and Wellner, J.A. (1986). {\it Empirical Processes with Applications to Statistics. Wiley Series in Probability and Mathematical Statistics.} John Wiley and Sons, Inc., New York.
       \bibitem{key tibshirani96} Tibshirani, R. (1996). Regression shrinkage and selection via the Lasso. 
   {\it J. Roy. Statist. Soc. Ser B}. 58, no.1, 267-288.
   \bibitem{key tibshirani97} Tibshirani, R. (1997). The lasso method for variable selection in the Cox model. 
   {\it Stat. Med.} 16 385-395.
    \bibitem{key geer} van de Geer, S. (1995). Exponential inequalities for martingales, with application to maximum likelihood estimation for counting processes. {\it Ann. Statist}. 23, no.5, 1779-1801.
  \bibitem{key buhlmann} van de Geer, S.A. and B\"{u}hlmann, P. (2009). On the conditions used to prove oracle results for the Lasso. {\it Electron. J. Stat}. 3, 1360-1392.
  \bibitem{key van der vaart} van der Vaart, A.W. and Wellner, J.A. (1996). {\it Weak Convergence and Empirical Processes. With Applications to Statistics.} Springer Series in Statistics. Springer-verlag, New York. 
  \bibitem{key ye-zhang} Ye, F. and Zhang, C-H. (2010). Rate minimaxity of the Lasso and Dantzig selector for the $l_q$ loss in $l_r$ balls. {\it J. Mach. Learn. Res.} 11, 3519-3540. 
  \bibitem{key zhang-lu} Zhang, H.H. and Lu, W. (2007). Adaptive Lasso for Cox's proportional hazards model. {\it Biometrika} 94, no.3, 691-703.
\end{thebibliography}
\end{document}